\newtheorem{thm}{Theorem}[section]
\newtheorem{theorem}[thm]{Theorem}
\newtheorem{lemma}[thm]{Lemma}
\newtheorem{corollary}[thm]{Corollary}
\renewcommand{\a}{\alpha}
\renewcommand{\l}{\lambda}
\newcommand{\lmin}{\lambda_{\rm min}}
\newcommand{\RR}{\mathbb{R}}
\title{Fractional decompositions and the smallest-eigenvalue separation}
\author{Fiachra Knox\thanks{Supported by a PIMS Postdoctoral Fellowship.}\\
      Department of Mathematics\\
      Simon Fraser University\\
      \texttt{fiachraknox@hotmail.com}
  \and
    Bojan Mohar%
      \thanks{B.M.~was supported in part by the NSERC Discovery Grant R611450 (Canada), by the Canada Research Chairs program, and by the Research Project J1-8130 of ARRS (Slovenia).}~%
      \thanks{On leave from IMFM, Department of Mathematics, University of Ljubljana.}\\
      Department of Mathematics\\
      Simon Fraser University\\
      \texttt{mohar@sfu.ca}
}
\date{\today}
\begin{document}

\maketitle

\begin{abstract}
A new method is introduced for bounding the separation between the value of $-k$ and the smallest eigenvalue of a non-bipartite $k$-regular graph. The method is based on fractional decompositions of graphs. As a consequence we obtain a very short proof of a generalization and strengthening of a recent result of Qiao, Jing, and Koolen \cite{QJK19} about the smallest eigenvalue of non-bipartite distance-regular graphs.
\end{abstract}

\section{Introduction}

Let us consider a connected graph of order $n$ and its adjacency matrix $A(G)$. We speak of the \emph{eigenvalues} the graph $G$, by which we mean the eigenvalues of $A(G)$.  We order the eigenvalues in descending order, $\l_1(G) \ge \l_2(G) \ge \cdots \ge \l_n(G)$. By the Perron-Frobenius Theorem, the \emph{spectral radius} of $A(G)$ is equal to $\l_1(G)$. This means that $|\l_n(G)|\le \l_1(G)$. It also follows by the Perron-Frobenius Theorem that $|\l_n(G)| = \l_1(G)$ if and only if the graph $G$ is bipartite. For convenience, we write $\lmin(G) = \l_n(G)$. In this note we are interested primarily in the \emph{gap}
$$
     \delta(G) = \lmin(G) + \l_1(G) \ge 0,
$$
which in some sense measures how far are we from being bipartite. We refer to \cite{CDS}, and in particular to the famous inequality of Hoffman \cite{Hoffman}, see also \cite{Bilu}.

If $G$ is $k$-regular, then $\l_1(G)=k$ and $\delta(G)=\lmin(G)+k$. Our main concern will be to derive bounds on $\delta(G)$.

Our original motivation for considering $\delta(G)$ was the following recent result of Qiao, Jing, and Koolen \cite{QJK19} about the smallest eigenvalue of nonbipartite distance-regular graphs.

\begin{theorem}[Qiao, Jing, and Koolen \cite{QJK19}]
\label{thm:QJK}
Let $G$ be a non-bipartite distance-regular graph with valency $k$ and odd girth $g$. Then there exists a constant $\varepsilon(g) > 0$ such that $\delta(G) \ge \varepsilon(g)k$.
\end{theorem}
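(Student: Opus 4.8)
The plan is to turn the eigenvalue estimate into a combinatorial decomposition statement by passing to the signless Laplacian quadratic form. Since $G$ is $k$-regular, $\delta(G)=k+\lmin(G)$ is precisely the smallest eigenvalue of $Q=kI+A(G)$, and its quadratic form splits edgewise,
\[
  x^{\top}Q\,x=\sum_{uv\in E(G)}(x_u+x_v)^2,
\]
so that $\delta(G)=\min_{\|x\|=1}\sum_{uv\in E}(x_u+x_v)^2$. The point of this reformulation is that the right-hand side is additive over edges, which is exactly what makes it compatible with a fractional decomposition of the edge set.

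Next I would introduce the fractional object. Call a family of odd cycles $\mathcal{C}$ with weights $y_C\ge 0$ a \emph{fractional odd-cycle decomposition} if $\sum_{C\ni e}y_C=1$ for every edge $e$. Suppose every cycle used has length $g$ (the odd girth), so that no shorter odd cycle is available anyway. Using that the least eigenvalue of the signless Laplacian of the cycle $C_\ell$ equals $2-2\cos(\pi/\ell)$ (a quantity decreasing in $\ell$, which is why short odd cycles are the efficient choice), edge-additivity gives
\[
  \sum_{uv\in E}(x_u+x_v)^2=\sum_{C}y_C\sum_{uv\in C}(x_u+x_v)^2
  \ge\bigl(2-2\cos(\tfrac{\pi}{g})\bigr)\sum_{C}y_C\sum_{v\in C}x_v^2 .
\]
Reversing the summation, the last sum equals $\sum_v x_v^2\sum_{C\ni v}y_C$; and since each cycle through $v$ meets exactly two of the $k$ edges at $v$, summing the coverage condition over those $k$ edges forces $\sum_{C\ni v}y_C=k/2$ for every $v$. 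With $\|x\|=1$ this yields the explicit bound
\[
  \delta(G)\ge k\bigl(1-\cos(\tfrac{\pi}{g})\bigr),
\]
which establishes the theorem with $\eps(g)=1-\cos(\pi/g)$, and is in fact a strengthening because the constant is explicit and the argument never uses the full force of distance-regularity.

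The single step where distance-regularity and the odd girth enter is the construction of the decomposition, and this is the crux. It suffices to show that the number of odd $g$-cycles through an edge is one and the same positive constant $m$ for every edge, since then $y_C=1/m$ on all $g$-cycles does the job. I would argue this in two moves. First, because $g$ is the odd girth, every closed walk of length $g$ is a simple $g$-cycle: a repeated vertex would split the walk into two closed walks of total length $g$, one of which is odd and shorter than $g$, contradicting minimality. Hence the number of $g$-cycles through an edge $uv$ equals the number of walks of length $g-1$ from $v$ to $u$, that is $(A^{g-1})_{uv}$. Second, for a distance-regular graph $A^{g-1}$ lies in the Bose--Mesner algebra, so $(A^{g-1})_{uv}$ depends only on $d(u,v)$; for adjacent $u,v$ this is a constant $m$, which is positive since $G$ is non-bipartite and so contains at least one $g$-cycle. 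The genuinely interesting difficulty is the passage from the local fact that $G$ has short odd cycles to a globally balanced weighting of them; distance-regularity makes this balancing automatic through the Bose--Mesner algebra, whereas for a general $k$-regular graph one would have to produce the fractional decomposition by an LP-duality or averaging argument, which is where I would expect the real work to lie.
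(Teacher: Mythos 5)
Your proof is correct, and it recovers not just Theorem \ref{thm:QJK} but the paper's strengthened bound $\delta(G)\ge(1-\cos(\pi/g))\,k$ of Corollary \ref{cor:DRG}, by the same overall strategy: a uniformly weighted fractional decomposition of $G$ into copies of $C_g$ plus a per-cycle spectral bound. Two steps differ in execution, though, and the differences are worth recording. First, you run the spectral argument through the signless Laplacian form $\sum_{uv\in E}(x_u+x_v)^2$; for $k$-regular $G$ this is equivalent to the Rayleigh-quotient computation in Theorem \ref{thm:min eigenvalue} (your zeroing-out of vertices off a cycle is exactly the paper's restriction trick with the vectors $x^i$, and your double-counting identity $\sum_{C\ni v}y_C=k/2$ is precisely Lemma \ref{lem:homogeneous fractional partition count} with $d_1=2$), but the paper's formulation is more general, allowing a mixed family $H_1,\dots,H_t$ of regular subgraphs rather than a single cycle length, while your edgewise form is cleaner in the single-family case. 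Second, and more substantively, your proof of the key counting lemma is genuinely different: the paper shows every edge lies on exactly $p^2q$ cycles of length $g$ by explicit intersection-number counting ($p=c_hc_{h-1}\cdots c_1$ paths of length $h$ from each endpoint to a common vertex at distance $h$, $q$ choices of that vertex, with the observation that two such paths cannot intersect without producing a shorter odd cycle), whereas you identify the count with $(A^{g-1})_{uv}$ --- after showing by the walk-splitting argument that every closed $g$-walk is a simple $g$-cycle --- and then invoke the fact that $A^{g-1}$ lies in the Bose--Mesner algebra. Your route is slicker and, as you correctly observe, uses strictly less than distance-regularity: it needs only that $(A^{g-1})_{uv}$ is constant over edges, so it applies verbatim to, e.g., $1$-walk-regular or edge-transitive graphs, which is in the spirit of the paper's emphasis that only a homogeneity property is required; the paper's count, in exchange, gives the weight $1/(p^2q)$ explicitly in terms of the intersection array. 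Both arguments ultimately rest on the same geometric fact, that odd closed configurations of total length $g$ must be simple cycles.
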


The proof in \cite{QJK19} gives a stronger result that for every odd integer $g\ge3$ there exists $\varepsilon(g)>0$ such that every non-bipartite distance-regular graph $G$ of odd girth $g$ has $\delta(G)\ge \varepsilon(g)k$, where $k$ is the degree of the vertices in $G$. We will mean this version when referring to Theorem \ref{thm:QJK}.

This note yields a very short proof and a strengthening of this results (Corollary \ref{cor:DRG})and gives a generalization to arbitrary graphs (Theorem~\ref{thm:min eigenvalue}).

Theorem \ref{thm:QJK} was used in \cite{QJK19} to classify all non-bipartite distance-regular graphs of diameters $D=4$ and $D=5$ that have $\delta(G)\le k/D$, continuing on their earlier result in \cite{QK18} which was used for diameter $D=3$ and was based on a similar spectral lemma bounding $\delta(G)$.

It is also mentioned in \cite[Remark 1.2]{QJK19} that $\varepsilon(g) \to 0$ as $g \to \infty$, and odd cycles show that $\varepsilon(g) \le 2 \cos^2(\tfrac{(g-1)\pi}{2g})$ (for $k=2$).

This result is interesting because it bounds the smallest eigenvalue away from the value $-k$ which is attained only for bipartite graphs. However, the dependence of the bound on $k$ is not tight. Our note gives the following improvement, which is asymptotically best possible and holds not only for distance-regular graphs but holds for arbitrary $k$-regular graphs, as long as they possess certain homogeneity property. Our main result, Theorem \ref{thm:min eigenvalue}, is given in the next section; its application to distance-regular graphs is outlined in Section \ref{sect:DRG}.

\section{Fractional decompositions and smallest eigenvalue of a graph}

A \emph{decomposition} of a graph $G$ is a partition of its edge-set into subsets $E_1,\dots,E_r$. Each subset $E_i$ determines a subgraph $G_i=(V(G),E_i)$ of $G$. We will denote by $A_i$ the adjacency matrix of $G_i$. Then we write $G=\sum_{i=1}^r G_i$ and we have $A(G)=\sum_{i=1}^r A_i$.

A \emph{fractional decomposition} of $G$ with nonnegative weights $\a_i\ge0$ ($i=1,\dots,r$) is a collection of spanning subgraphs $G_i=(V(G),E_i)$ ($i=1,\dots,r$), whose edge-sets $E_i$ are not necessarily edge-disjoint, such that for each edge $e\in E(G)$,
\begin{equation}
\label{eq:edge condition}
   \sum_{i:e\in E_i} \a_i = 1.
\end{equation}
Then we write $G=\sum_{i=1}^r \a_i G_i$ and we have $A(G)=\sum_{i=1}^r \a_i A_i$. We request that the subgraphs $G_i$ are spanning in order to have the latter correspondence between their adjacency matrices and $A(G)$. However, we will consider their subgraphs $G_i'$ that are obtained from $G_i$ by removing all isolated vertices (i.e., vertices of degree 0). We also let $V_i=V(G_i')$ be the set of vertices of positive degree in $G_i$.

The fractional decomposition is \emph{homogeneous} if there are regular graphs $H_1,\dots,H_t$ such that each $G_i'$ is isomorphic to one of these graphs and for each $j\in [t]$, the value
\begin{equation}
\label{eq:homogeneous}
   s_j = s_j(v) = \sum_{i: G_i \approx H_j, v\in V_i} \a_i
\end{equation}
is the same for each vertex $v\in V(G)$.

Figure \ref{fig:1} shows two graphs that have homogeneous fractional decomposition into copies of $H_1=C_3$ and $H_2=C_5$ with weights $\tfrac{1}{2}$ for every subgraph, where the cycles in the decomposition are all facial cycles. (The faces of the first one correspond to the planar embedding, including the outer face, and the faces for the other one correspond to the faces of the embedding in the projective plane, where diametrically opposite points and edges on the circle are pairwise identified. In the second example one can also take weight $\a$ ($0\le\a \le 1$) for the triangles and weight $1-\a$ for the 5-cycles.

\begin{figure}[htb]
\centering
\includegraphics[width=0.9\linewidth]{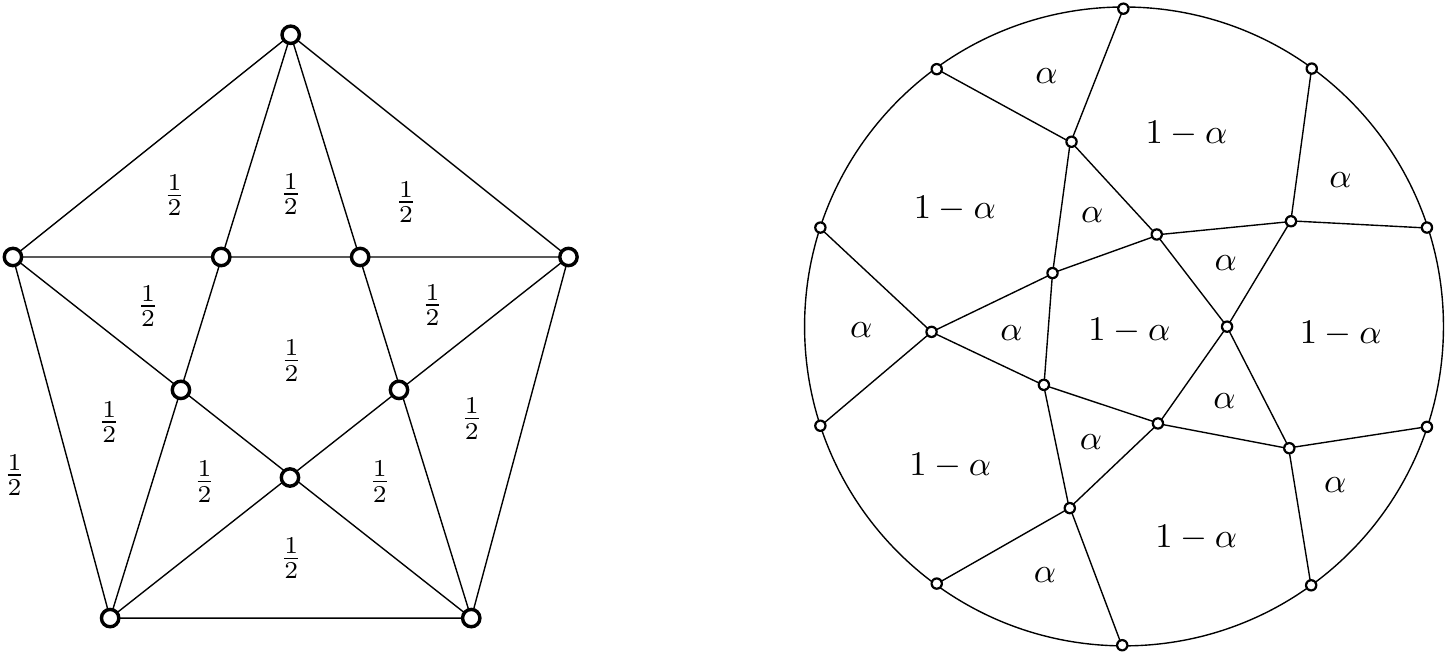}
\caption{Two examples of homogeneous fractional decompositions in which the subgraphs of the decomposition correspond to the facial cycles of lengths 3 and 5 of the planar and projective-planar embeddings shown.}
\label{fig:1}
\end{figure}

\begin{lemma}
\label{lem:homogeneous fractional partition count}
Suppose that $G_1,\dots,G_r$ is a homogeneous fractional partition of a $k$-regular graph $G$ with weights $\a_1,\dots,\a_r$. Let $H_1,\dots,H_t$ be the corresponding graphs, where $H_j$ is $d_j$-regular, $j\in [t]$, and the constants $s_j$ are given by $($\ref{eq:homogeneous}$)$. Then we have
\begin{equation}
\label{eq:lemma}
   \sum_{j=1}^t d_js_j = k.
\end{equation}
\end{lemma}

\begin{proof}
The sum of $d_js_j$ is the sum of the weights of edges in $G_i$ on all edges of $G$ incident with a vertex $v$. By the edge-condition (\ref{eq:edge condition}), this sum contributes 1 to each edge incident with $v$. Since $G$ is $k$-regular, the sum must be equal to $k$.
\end{proof}

\begin{theorem}
\label{thm:min eigenvalue}
Suppose that $G_1,\dots,G_r$ is a homogeneous fractional partition of a $k$-regular graph $G$ with weights $\a_1,\dots,\a_r$. Let $H_1,\dots,H_t$ be the corresponding regular graphs, and let the constants $s_j$ $(j\in [t])$ be given by {\rm (\ref{eq:homogeneous})}. Then we have
$$
   \delta(G) \ge \sum_{j=1}^t \delta(H_j) s_j.
$$
\end{theorem}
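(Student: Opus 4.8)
The plan is to combine the variational (Rayleigh-quotient) characterization of $\lmin$ with the additive structure $A(G)=\sum_{i=1}^r \a_i A_i$ supplied by the fractional decomposition. Since $G$ is $k$-regular we have $\l_1(G)=k$, so $\delta(G)=\lmin(G)+k$, and likewise $\delta(H_j)=\lmin(H_j)+d_j$ because $H_j$ is $d_j$-regular. Feeding $\sum_{j} d_j s_j = k$ from Lemma~\ref{lem:homogeneous fractional partition count} into the target inequality, one sees that $\delta(G)\ge\sum_j \delta(H_j)s_j$ is equivalent to the cleaner statement
$$
  \lmin(G) \ge \sum_{j=1}^t \lmin(H_j)\, s_j,
$$
so it suffices to prove this lower bound on $\lmin(G)$ and then add back $k=\sum_j d_j s_j$ to both sides.

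To establish the bound, I would fix an arbitrary $x\in\RR^{V(G)}$ and expand $x^{\top}A(G)x=\sum_{i=1}^r \a_i\, x^{\top}A_i x$. The key observation is that $A_i$ is supported on $V_i$: isolated vertices of $G_i$ contribute nothing, so $x^{\top}A_i x = x_{V_i}^{\top}A(G_i')\,x_{V_i}$, where $x_{V_i}$ is the restriction of $x$ to $V_i$ and $G_i'\approx H_{j(i)}$. Applying the Rayleigh lower bound on $\RR^{V_i}$ gives
$$
  x^{\top}A_i x \;\ge\; \lmin(H_{j(i)}) \sum_{v\in V_i} x_v^2 .
$$
No sign hypothesis on $\lmin(H_j)$ is needed here, precisely because the bound uses the restricted norm $\sum_{v\in V_i}x_v^2$ rather than $\|x\|^2$; this is exactly what allows the homogeneity constants to surface.

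Next, I would multiply by $\a_i\ge0$, sum over $i$, and regroup according to which $H_j$ the piece $G_i'$ is isomorphic to. After exchanging the order of summation, the coefficient of each $x_v^2$ becomes $\sum_{i:\,G_i\approx H_j,\ v\in V_i}\a_i = s_j$, the homogeneity constant of (\ref{eq:homogeneous}); this sum exchange is the step where homogeneity is indispensable. We thus obtain
$$
  x^{\top}A(G)x \;\ge\; \Bigl(\sum_{j=1}^t \lmin(H_j)\, s_j\Bigr)\|x\|^2
$$
for every $x$, and choosing $x$ to be a $\lmin(G)$-eigenvector yields the desired inequality for $\lmin(G)$.

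The main obstacle—really the only subtlety—is the correct treatment of isolated vertices. A naive Rayleigh bound for the full matrices $A_i$ on $\RR^{V(G)}$ would replace each $s_j$ by $\sum_{i:\,G_i'\approx H_j}\a_i$, a quantity larger by a factor of order $n/|V(H_j)|$, and hence useless. Restricting to $V_i$ before applying the bound is what collapses the vertex-by-vertex weights to the single constant $s_j$, and carefully verifying this bookkeeping in the sum exchange is the one place where care is required.
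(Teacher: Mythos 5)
Your proposal is correct and follows essentially the same route as the paper's proof: restricting each quadratic form $x^{\top}A_i x$ to the support $V_i$ (the paper's vectors $x^i$), applying the Rayleigh lower bound there, exchanging the order of summation so that homogeneity produces the constants $s_j$, and finally invoking Lemma~\ref{lem:homogeneous fractional partition count} to pass from $\lmin$ to $\delta$. The only difference is cosmetic --- you prove the bound for arbitrary $x$ before specializing to a $\lmin(G)$-eigenvector, while the paper works with the eigenvector from the start.
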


\begin{proof}
For $i\in [r]$, let $V_i$ be the set of vertices of $G_i'$.
Let $x\in \RR^{V(G)}$ be a unit eigenvector for $\lmin(G)$. For each $i\in [r]$, let $x^i$ be the vector obtained from $x$ by changing the coordinates $x_v$ to 0 if $v$ has degree 0 in $G_i$. Then
\begin{equation}
\label{eq:shorter vector}
   (A_ix,x) = (A_i x^i, x^i) \ge \lmin(A_i)(x^i,x^i).
\end{equation}
Therefore,
\begin{eqnarray*}
\label{eq:sum}
   \lmin(G) &=& (Ax,x) = \Bigl(\sum_i \a_iA_ix,x\Bigr) = \sum_i \a_i (A_i x^i, x^i) \\
   &\ge& \sum_{i=1}^r \a_i \, \lmin(A_i)(x^i,x^i) \\
   &=& \sum_{j=1}^t \sum_{i: G_i' \approx H_j} \a_i\,\lmin(H_j) \sum_{v\in V_i} x_v^2 \\
   &=& \sum_{j=1}^t \lmin(H_j) \sum_{v\in V(G)} x_v^2 \sum_{i: G_i' \approx H_j, v\in V_i} \a_i \\
   &=& \sum_{j=1}^t \lmin(H_j)\, s_j \sum_{v\in V(G)} x_v^2 = \sum_{j=1}^t \lmin(H_j)\, s_j \\
   &=& \sum_{j=1}^t (\delta(H_j) - d_j) s_j = -k + \sum_{j=1}^t \delta(H_j) s_j.
\end{eqnarray*}
This implies that $\delta(G) \ge \sum_{j=1}^t \delta(H_j) s_j$, which we were to prove.
\end{proof}

Let us illustrate Theorem \ref{thm:min eigenvalue} on some examples.

\begin{itemize}
\item[(a)]
Consider the first graph depicted in Figure \ref{fig:1}. Here $H_1=C_3$ and $H_2=C_5$, the fractional decomposition has all weights equal to $\tfrac{1}{2}$ and has 10 copies of $C_3$ and 6 copies of $C_5$. Since each vertex is in three copies of $C_3$ and one $C_5$, we have $s_1=\tfrac{3}{2}$ and $s_2=\tfrac{1}{2}$. This gives that
$$
   \delta(G) \ge \tfrac{3}{2} \delta(C_3) + \tfrac{1}{2} \delta(C_5) =
   \tfrac{3}{2}(-1+2) + \tfrac{1}{2}(\tfrac{-1-\sqrt{5}}{2} +2) =
   \tfrac{9-\sqrt{5}}{4}.
$$
\item[(b)]
Similarly as above we get for the second example in Figure \ref{fig:1} that
$$ 
   \delta(G) \ge 2\a + 2(1-\a)(\tfrac{3-\sqrt{5}}{2}) = 3-\sqrt{5} + \alpha(\sqrt{5}-1).
$$
This bound is strongest for $\a=1$, and shows that $\delta(G)\ge2$.
\item[(c)]
The graph in (b) is the line graph of the Petersen graph. In fact, every line graph $G=L(H)$ of a $k$-regular graph ($k\ge3$) has a homogeneous decomposition into cliques of order $k$, two cliques per vertex. Therefore, $\delta(G) \ge 2\delta(K_k) = 2(k-2)$.
Of course, this is equivalent to the well-known fact that the smallest eigenvalue of a line graph $L(H)$ is $-2$ if $|E(H)|>|V(H)|$. So, this fact is not new, but these examples show that our theorem is tight for every even degree. 
\item[(d)]
Another class of tight examples can be obtained as follows.
By taking the ``blow-up" of the odd cycle $C_{2h+1}$ in which we replace each vertex $v$ by an independent set $I_v$ of cardinality $k$ and each edge $uv$ by a copy of the complete bipartite graph $K_{k,k}$ joining $I_v$ and $I_u$, we obtain a graph of degree $2k$ and odd girth $2h+1$ which has a homogeneous fractional decomposition into copies of the cycle $C_{2h+1}$. The bound of Theorem \ref{thm:min eigenvalue} is tight for all such graphs.
\end{itemize}

The setup of homogeneous fractional decompositions can be applied in other similar settings if the subgraphs $G_i'$ are spanning (i.e. for each $i$, every vertex of $G$ is incident with an edge in $G_i$). In that case, bounds similar to that of Theorem \ref{thm:min eigenvalue} can be derived for the \emph{spectral gap} $\l_1(G)-\l_2(G)$, or for any convex function of eigenvalues, like the sum of $t$ largest or $t$ smallest eigenvales. 

Note that the \emph{degree matrix} $D(G)$ and the \emph{Laplacian matrix} $L(G)=D(G)-A(G)$ are also fractionally decomposed with the same coefficients, $D(G) = \sum_{i=1}^r \a_i D(G_i)$ and $L(G) = \sum_{i=1}^r \a_i L(G_i)$. If each $G_i'$ is spanning, then there is a result similar to Theorem \ref{thm:min eigenvalue} bounding the largest eigenvalue of $L(G)$ and also bounding the smallest non-trivial eigenvalue of $L(G)$.

\section{Distance-regular graphs}
\label{sect:DRG}

The following decomposition lemma enables us to apply the bound of Theorem \ref{thm:min eigenvalue} to distance-regular graphs.

\begin{lemma}
Let $G$ be a distance-regular graph with odd girth $g=2h+1$. Then $G$ admits a homogeneous fractional decomposition into copies of the odd cycle $C_{2h+1}$.
\end{lemma}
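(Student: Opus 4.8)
The plan is to take \emph{all} copies of the cycle $C_{2h+1}$ in $G$ as the subgraphs of the decomposition, each carrying the same weight $\alpha_i = 1/m$, where $m$ is the number of such cycles through a fixed edge. For this to define a homogeneous fractional decomposition into copies of $H_1 = C_{2h+1}$ it suffices to establish two facts: (A) every edge of $G$ lies on at least one copy of $C_{2h+1}$, and (B) the number $m$ of copies of $C_{2h+1}$ through an edge is the same for every edge $uv$. Indeed, if (A) and (B) hold then each edge lies in exactly $m$ of the (equally weighted) cycles, so the edge-condition (\ref{eq:edge condition}) reads $m\cdot\tfrac1m=1$; and since every $G_i'\cong C_{2h+1}$ is $2$-regular, counting incidences (cycle, edge at $v$) at a vertex $v$ gives $2\cdot\#\{\text{cycles through }v\}=\sum_{e\ni v} m = km$, so each vertex lies on $km/2$ cycles and hence $s_1(v)=\tfrac1m\cdot\tfrac{km}{2}=\tfrac{k}{2}$ is constant, which is exactly the homogeneity condition (\ref{eq:homogeneous}) for the single regular graph $H_1=C_{2h+1}$.

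The heart of the argument is a bijective description of the cycles through a given edge. First I would record the standard fact that, because the odd girth is $2h+1$, every copy of $C_{2h+1}$ is isometrically embedded: for vertices $c_i,c_j$ on such a cycle the graph distance $d(c_i,c_j)$ equals the shorter of the two arc-lengths. (If a shortest path between $c_i$ and $c_j$ were strictly shorter than the shorter arc, joining it to the arc of the opposite parity would yield an odd closed walk of length $<2h+1$, hence an odd cycle shorter than the girth.) Writing a cycle through the edge $uv$ as $u=c_0,\,c_1=v,\,c_2,\dots,c_{2h}$, the opposite vertex $w=c_{h+1}$ then satisfies $d(u,w)=d(v,w)=h$, and the two arcs from $w$ to $u$ and from $w$ to $v$ are geodesics. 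Conversely, given any vertex $w$ with $d(u,w)=d(v,w)=h$ together with a geodesic $P_u$ from $u$ to $w$ and a geodesic $P_v$ from $v$ to $w$, I claim $P_u\cup P_v\cup\{uv\}$ is a genuine $C_{2h+1}$. This is the step I expect to be the main obstacle, and it is exactly where the odd-girth hypothesis is used: if $P_u$ and $P_v$ shared a vertex $z\neq w$, then comparing the distances to $w$ along each geodesic forces $d(u,z)=d(v,z)=a$ for some $a<h$, so the closed walk $u\to z\to v\to u$ of odd length $2a+1<2h+1$ would contain an odd cycle shorter than the girth, a contradiction. Hence $P_u$ and $P_v$ meet only in $w$, and the map sending a cycle to the triple $(w,P_u,P_v)$ is a bijection.

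It then remains to count the triples using distance-regularity. The number of vertices $w$ with $d(u,w)=d(v,w)=h$ for an edge $uv$ is the intersection number $p^{1}_{hh}$, and the number of geodesics between two vertices at distance $h$ is a constant $g_h=c_1c_2\cdots c_h$; both depend only on the intersection array and not on the chosen edge. Therefore
$$
   m \;=\; p^{1}_{hh}\,g_h^{\,2},
$$
which proves (B); moreover $m\ge 1$ because $G$ contains a copy of $C_{2h+1}$ and $m$ is the same for all edges, which gives (A). Assigning weight $1/m$ to every copy of $C_{2h+1}$ thus yields the desired homogeneous fractional decomposition, completing the proof.
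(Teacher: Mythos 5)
Your proof is correct and follows essentially the same route as the paper: you take all copies of $C_{2h+1}$ with uniform weight $1/m$, and compute $m = p^{1}_{hh}(c_1\cdots c_h)^2$ via the same bijection between cycles through an edge $uv$ and triples consisting of a vertex $w$ at distance $h$ from both endpoints together with two necessarily disjoint geodesics, which is exactly the paper's count $p^2q$ with $p=c_h\cdots c_1$ and $q=p^{1}_{hh}$. Your write-up is in fact slightly more complete, since you explicitly verify the isometric embedding of the cycles (ensuring every $(2h+1)$-cycle through $uv$ arises from such a triple) and the homogeneity constant $s_1(v)=k/2$, both of which the paper leaves implicit.
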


\begin{proof}
With the standard notation for intersection array for distance-regular graphs, it is clear that for every pair of vertices $v$ and $x$ at distance $h$, there are precisely $p := c_hc_{h-1}\cdots c_1$ paths of length $h$ from $x$ to $v$. Also, distance-regularity implies that for every edge $xy\in E(G)$, the number $q$ of vertices that are at distance $h$ from both of them is independent of the edge. Since the odd girth is $2h+1$, we know that $q>0$. This implies that every edge belongs to precisely $p^2q$ cycles of length $2h+1$. (To see this one has to realize that a path of length $h$ from $x$ to $v$ and such path from $y$ to $v$ cannot intersect, since that would give a shorter odd cycle; hence any two such paths together with the edge form a cycle.) Let $G_1,\dots,G_r$ be all cycles of length $2h+1$ in $G$. Then it is clear that $G = \sum_{i=1}^r \tfrac{1}{p^2q} \, G_i$.
\end{proof}

\begin{corollary}
\label{cor:DRG}
Let $G$ be a $k$-regular non-bipartite distance-regular graph with odd girth $g=2h+1$. Then
$$
  \delta(G) \ge (1-\cos(\tfrac{\pi}{2h+1}))\,k > 
  \biggl( \frac{\pi^2}{2(2h+1)^2} - \frac{\pi^4}{24(2h+1)^4} \biggr)\, k \,.$$
\end{corollary}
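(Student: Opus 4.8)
The plan is to feed the odd-cycle decomposition into Theorem~\ref{thm:min eigenvalue} and then simply read off the spectrum of an odd cycle. First I would invoke the preceding decomposition lemma: since $G$ is a non-bipartite distance-regular graph of odd girth $g=2h+1$, it admits a homogeneous fractional decomposition in which every $G_i'$ is a copy of the single regular graph $H_1=C_{2h+1}$. Thus $t=1$ and $d_1=2$, so Lemma~\ref{lem:homogeneous fractional partition count} gives $2s_1=k$, i.e.\ $s_1=k/2$. Substituting into Theorem~\ref{thm:min eigenvalue} yields at once
$$
  \delta(G) \;\ge\; \delta(C_{2h+1})\,s_1 \;=\; \tfrac{k}{2}\,\delta(C_{2h+1}).
$$

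The next step is to evaluate $\delta(C_{2h+1})$ from the well-known cycle spectrum. The eigenvalues of $C_n$ are $2\cos(2\pi j/n)$ for $j=0,1,\dots,n-1$, so $\l_1(C_n)=2$. For $n=2h+1$ odd, the most negative cosine occurs at $j=h$, where $2\pi h/(2h+1)=\pi-\pi/(2h+1)$, giving $\lmin(C_{2h+1})=-2\cos(\tfrac{\pi}{2h+1})$. Hence $\delta(C_{2h+1})=2\bigl(1-\cos(\tfrac{\pi}{2h+1})\bigr)$, and plugging this into the display above produces the first claimed inequality $\delta(G)\ge\bigl(1-\cos(\tfrac{\pi}{2h+1})\bigr)k$.

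For the second inequality I would cancel the factor $k>0$ and reduce to the purely analytic claim $1-\cos x > \tfrac{x^2}{2}-\tfrac{x^4}{24}$ with $x=\pi/(2h+1)$. This follows from the Taylor series $1-\cos x=\sum_{m\ge1}(-1)^{m+1}x^{2m}/(2m)!$, which is alternating; its terms $b_m=x^{2m}/(2m)!$ decrease in magnitude as soon as $(2m+1)(2m+2)>x^2$, and since $h\ge1$ forces $x\le\pi/3$ and hence $x^2<12\le(2m+1)(2m+2)$ for every $m\ge1$, the alternating-series estimate applies. Truncating after the $m=2$ term then gives a \emph{strict} lower bound, $1-\cos x>\tfrac{x^2}{2}-\tfrac{x^4}{24}$, and substituting $x=\pi/(2h+1)$ reproduces exactly the right-hand side of the corollary.

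I expect there to be no genuine obstacle here; the statement is a direct chaining of the decomposition lemma, Lemma~\ref{lem:homogeneous fractional partition count}, and Theorem~\ref{thm:min eigenvalue}. The only care needed is in two routine, error-prone spots: correctly identifying $j=h$ (rather than $j=h+1$, which by symmetry gives the same value) as the minimizer of the cosine for odd $n$, and verifying that the cosine series truly alternates with decreasing terms on $0<x\le\pi/3$, so that the truncation yields a lower and not an upper bound.
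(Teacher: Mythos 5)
Your proposal is correct and follows essentially the same route as the paper: invoke the decomposition lemma to get a homogeneous fractional decomposition into copies of $C_{2h+1}$, deduce $s_1=k/2$ from Lemma~\ref{lem:homogeneous fractional partition count}, apply Theorem~\ref{thm:min eigenvalue}, and bound $1-\cos x$ below by $\tfrac{x^2}{2}-\tfrac{x^4}{24}$ via the alternating cosine series. You merely spell out details the paper leaves implicit (the computation $s_1=k/2$, the minimizing index $j=h$, and the alternating-series justification of the strict inequality), all of which check out.
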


\begin{proof}
By the previous lemma, $G$ has a homogeneous fractional decomposition into copies of the graph $H=C_{2h+1}$. It is well-known that
$$
   \lmin(C_{2h+1}) = 2 \cos(\tfrac{2h\,\pi}{2h+1}) =
   - 2 \cos(\tfrac{\pi}{2h+1}) > -2 + (\tfrac{\pi}{2h+1})^2 - \tfrac{1}{12}(\tfrac{\pi}{2h+1})^4.
$$
Now Theorem \ref{thm:min eigenvalue} completes the proof.
\end{proof}


\begin{thebibliography}{9}

\bibitem{Bilu}
Yonatan Bilu, Tales of {H}offman: three extensions of {H}offman's bound on the graph chromatic number,
J. Combin. Theory Ser. B 96 (2006) 608--613.

\bibitem{CDS}
D.M. Cvetković, M. Doob, H. Sachs,
Spectra of graphs. Theory and applications (Third edition),
Johann Ambrosius Barth, Heidelberg, 1995.

\bibitem{Hoffman}
A.J. Hoffman, On eigenvalues and colorings of graphs,
Graph Theory and Its Applications, Proc. Adv. Sem., Math. Research Center, Univ. of Wisconsin, Madison, WI, 1969, Academic Press, New York (1970), pp. 79--91.

\bibitem{QK18}
Z. Qiao and J. Koolen, A valency bound for distance-regular graphs, J. Combin. Theory, Ser. A, 155:304--320, 2018.

\bibitem{QJK19}
Zhi Qiao, Yifan Jing, and Jack Koolen,
Non-bipartite distance-regular graphs with a small smallest eigenvalue,
Electronic J. Combin. 26(2) (2019), \#P2.41.

\end{thebibliography}

\end{document}